\documentclass[11pt]{article}

\textheight 24cm \textwidth 17cm \oddsidemargin -.6cm \headheight
0pt \headsep -.6cm

\usepackage{amssymb}
\usepackage[centertags]{amsmath}
\usepackage{amsthm}

\newtheorem{theorem}{Theorem}

\newtheorem{corollary}{Corollary}

\newtheorem{example}{Example}
\newtheorem{definition}{Definition}
\newtheorem{remark}{Remark}

\newcommand{\Graph}{\operatorname{Graph}}

\newcommand{\R}{\mathbb{R}}
\newcommand{\skal}[2]{\langle #1,#2 \rangle}

\title{Weak Cyclic Monotonicity and\\Existence of Solutions of Differential Inclusions }
\author{Elza Farkhi\thanks{School of Mathematical Sciences, Sackler Faculty of Exact
Sciences, Tel Aviv University, 69978 Tel Aviv, Israel, email: 
elza@post.tau.ac.il} 
}

\begin{document}

\maketitle


\begin{abstract}
The notion of weak cyclic monotonicity of set-valued maps generalizing the cyclyc
monotonicity is introduced. The 
existence of solutions of differential inclusions with compact, upper semi-continuous,  
not necessarily convex right-hand sides 
in $\mathbb{R}^n$ is proved for weakly cyclic monotone 
right-hand sides.
 
\end{abstract}
\begin{quote}
   \textbf{Key words:} differential inclusion, nonconvex right-hand side, existence of solutions,
      weakly monotone map, cyclic monotone map
\end{quote}
\begin{quote}
   \textbf{2010 Mathematics Subject Classification:} 34A60, 34A12, 47H05
\end{quote}

\section{Introduction} 

We investigate the differential inclusion
\begin{equation}\label{1} 
 \dot x(t)\in F(x(t)),\quad x(0)= x_0\in \mathbb{R}^n,\quad t\in I=[0,T], 
\end{equation} 
where the set-valued mapping $F$ is upper semi-continuous, with non-empty compact, 
not necessarily convex values in $\mathbb{R}^n$. 

Among the relevant results on existence of solutions of such inclusions,
we should first mention the case of upper semi-continuous maps with convex compact values
(see e.g. \cite[Chapter 2]{D}). 
In particular, the classical existence theorem for right-hand side
which is the negation of a maximal monotone 
map  \cite[Sec.~3.2, Theorem~1]{AubCel:84}.
Maximal monotone set-valued maps are almost everywhere single-valued \cite{Zar:73,Kend:74},
 and it is easy to see that at the points where they are not single-valued, they
are upper semi-continuous with convex values.
Other important existence results for differential inclusions with non-convex right-hand sides
 are the results of Filippov \cite{FIL:zametki_71} in finite dimensions and De Blasi, Pianigiani
\cite{DB_P:91} in Banach spaces for  continuous 
$F$. The continuity may replace the monotonicity and convexity, as is shown there.

An important existence result for upper semi-continuous maps,
with not necessarily convex values, is established in \cite{BreCelCol:89},
where the condition of cyclic monotonicity is imposed. 
We remind that the values of a cyclic monotone map are subsets of the values of the subdifferential
of a proper convex function \cite{Rock:ConvAnal}.
Generally,  cyclic monotonicity is stronger than just monotonicity, but  
in $\R^1$ they are equivalent.

In \cite{KrRibTsa:07} the notion of colliding on the set of discontinuities of $F$ is investigated 
and geometric conditions to avoid it or to escape of this set are studied.

Existence of solutions is established in \cite{FarDonBai:14} 
uder  a weak  componentwise monotonicity condition and a standard growth condition.
This componentwise monotonicity condition is equivalent to 
 the strengthened one-sided Lipschitz (SOSL) condition \cite{LemVel:98} with constant zero
imposed on the negation of $F$, $-F(\cdot)$. 
In one dimension the latter condition is equivalent to
the  one-sided Lipschitz (OSL) property  in the sense of \cite{DonFar:98} of \ $-F$ with constant zero.
In the  proof a limiting procedure for the Euler polygons and their velocities is used. 
The method of Euler polygonal approximations is a widely exploited tool for deriving existence, 
approximation, well-posedness and stability properties of solutions of
differential equations and inclusions (see e.g. \cite{ADonFar:89,DonFar:98,Veliov:97}), 
as well as of variational and control problems \cite{DonLem:92, 
Mord:93,Mord:bookII}. A typical proof of the existence of solutions
in the upper-semicontinuous case  uses the Arzela-Ascoli theorem applied
to the polygonal solutions and/or Mazur's weak closure theorem for the velocities. The
limit of a convergent sequence of Euler approximants is then a solution of the inclusion with convexified right-hand
side. In \cite{FarDonBai:14} the existence of solutions without convexity is achieved 
deriving monotonicity in the time of each coordinate of the velocities from the
weak  componentwise monotonicity condition. 
The Helly's selection principle (see e.g. \cite[Chap.~10]{KF}) is applied there instead of the Mazur's 
weak closure theorem to obtain strong compactness of the set of velocities.

Here we formulate a condition called ``weak cyclic monotonicity'', which generalizes 
the cyclic monotonicity 
and we prove the existence of solutions under this condition. 
In  the proof we construct a cyclic monotone subinclusion of the given inclusion
and apply the existence theorem of \cite{BreCelCol:89}.
Let us note that in general 
the weak cyclic monotonicity condition is stronger than the weak monotonicity condition.
The question whether one can prove existence for weakly monotone maps
(maps having OSL negation with constant zero)
 in more than one dimension is still open.


\section{Weak cyclic monotonicity}

First we introduce some notation. For notions used in the paper, but not 
explicitly defined here we refer the reader to \cite{D, Rock:ConvAnal, AubCel:84} . 

Let $u,v\in \mathbb{R}^n$. We denote by $\skal{u}{v}$ the scalar product  of the vectors $u,v$ 
and by $|v|$ the Euclidean norm of $v$.
For a bounded set $A\subset\R^n$,
we denote $\|A\|=sup\{|a|:a\in A\}$. The support function of the set $A$ in direction $v$
is denoted by $\delta^*(v,A)=\sup_{a\in A} \skal{v}{a}$.
The classical  convex subdifferential of a convex function $f$ at the point $x$ is denoted by $\partial f(x)$.

We use the fact that the set-valued map $F$ from $\mathbb{R}^n$ to a compact set in $\mathbb{R}^n$
is upper semicontinuous (USC) iff its graph is closed \cite[Section 1.1]{AubCel:84}.

We recall that the map $F:\R^n\rightrightarrows \mathbb{R}^n$ is monotone if for every $x,y\in\mathbb{R}^n,v_x\in F(x),v_y\in F(y)$,
\begin{equation}\label{mon} 
 \left<x- y, v_x- v_y\right> \geq 0 . 
\end{equation}
We call $F$  \textbf{\emph{weakly monotone}} if for every $x,y\in\mathbb{R}^n$ and every 
$v_x\in F(x)$ \emph{there exists} $v_y\in F(y)$ such that \eqref{mon} holds.

We also recall that the (monotone) set-valued function $F: \mathbb{R}^n\rightrightarrows \mathbb{R}^n$
is called 
\textbf{\emph{cyclic monotone}} if for every natural $m$, every $x_0,x_1,...,x_m\in\mathbb{R}^n$,
and every $v_i\in F(x_i), \ i=0,...m$,
\begin{equation}
\label{mc} 
 \skal{x_m- x_0}{v_m}\geq \sum_{i=1}^{m} \skal{x_i- x_{i-1}}{v_{i-1}}.
\end{equation}
It is well-known that monotone, hence all cyclyc monotone maps, are almost everywhere single-valued 
(in the sense of measure \cite{Zar:73} and in Baire category sense \cite{Kend:74}). 
The weaker property we define below 
may hold for everywhere non-single-valued maps.

In the next definitions and in all considerations below we fix the point $x_0$ to be the one of  \eqref{1}
and also fix an arbitrary $v_0\in F(x_0)$.
\begin{definition}
\label{Def_WECM_seq} 
The finite 
sequence 
$\{(x_i,v_i)\}_{i=0}^k\subset Graph(F)$ is a \textbf{cyclic monotone sequence (CM sequence)}
if \eqref{mc} holds for every $m=1,2,...k$. 
The one-term sequence $\{(x_0,v_0)\}\subset Graph(F)$ is  a CM sequence 
 by definition. 
\end{definition}
It is clear from this definition that if one cuts several last terms
of the CM sequence $\{(x_i,v_i)\}_{i=0}^k$, the shorter
sequence $\{(x_i,v_i)\}_{i=0}^m$ for $m\le k$ is also CM.

Now we give a recursive definition of a weakly cyclic monotone map
based on cyclic monotone sequences on its graph.
\begin{definition}
\label{Def_WECM} 
The 
set-valued map $F: \mathbb{R}^n\rightrightarrows \mathbb{R}^n$
is called \textbf{weakly cyclic monotone (WCM)} 
if 
for every natural $m$, any CM 
sequence $\{(x_i,v_i)\}_{i=0}^{m-1}\subset Graph(F)$ 
and  every $x_{m}\in\mathbb{R}^n$ 
there exists $v_{m}\in F(x_{m})$ such that the sequence
$\{(x_i,v_i)\}_{i=0}^{m}$ is a CM 
sequence.
\end{definition}
In other words, $F$ is weakly cyclic monotone if every CM sequence  may be continued 
on the graph of $F$,  
preserving its cyclic monotonicity. 

Clearly, the weak cyclic monotonicity generalizes the cyclic monotonicity. 
On the other side, the weak cyclic monotonicity of set-valued maps is stronger than the weak monotonicity defined above.

\begin{remark}
\label{cond_WCM2}
The following condition 
implies the WCM condition:
\begin{equation}
\label{WMC2}
\delta^*(x_m- x_0,F(x_m))\geq \sum_{i=1}^{m} \delta^*(x_i- x_{i-1},F(x_{i-1})),
\end{equation}
for every finite sequence $\{x_i\}_{i=0}^m$. Indeed,
to see that 
the  condition \eqref{WMC2} implies weak cyclic monotonicity, 
we take an arbitrary CM 
sequence $\{(x_i,v_i)\}_{i=0}^{m-1}\subset Graph(F)$ 
and   $x_{m}\in\mathbb{R}^n$ 
and chose $v_{m}\in F(x_{m})$ such that 
$\skal{x_m- x_0}{v_m}= \delta^*(x_m- x_0,F(x_m))$. Then by \eqref{WMC2}, 
$$\skal{x_m- x_0}{v_m}= \delta^*(x_m- x_0,F(x_m))\geq \sum_{i=1}^{m} \delta^*(x_i- x_{i-1},F(x_{i-1}))
\geq \sum_{i=1}^{m} \skal{x_i- x_{i-1}}{v_{i-1}},$$
which implies that the sequence
$\{(x_i,v_i)\}_{i=0}^{m}$ is a CM 
sequence.

On the other hand, 
the condition \eqref{WMC2} seems stronger than the WCM condition,
since in Definition \ref{Def_WECM} there exists $v_m$ satisfying \eqref{mc} 
only for a CM sequence $\{x_i,v_i\}_{i=1}^{m-1}$, while \eqref{WMC2}
means the existence of $v_m$ satisfying \eqref{mc} for every sequence 
$\{x_i,v_i\}_{i=1}^{m-1}$ in the graph of $F$.
\end{remark}


%



\begin{remark}
\label{rem_wcm2}
Given a CM sequence $\{(x_i,v_i)\}_{i=0}^{m-1}$ and $x_m\in\R^n$, 
if one can choose $v_m\in F(x_m)$ satisfying
\begin{equation}
\label{wcm2}
\skal{x_m- x_0}{v_m-v_{m-1}}\ge 0,
\end{equation}
then the sequence $\{(x_i,v_i)\}_{i=0}^{m}$ is CM.

Indeed, \eqref{wcm2} may be written as
\[
\skal{x_m- x_0}{v_m} \ge \skal{x_m- x_{m-1}}{v_{m-1}} + \skal{x_{m-1}- x_0}{v_{m-1}}.
\]
From this, one can prove 
\eqref{mc} 
using a simple inductive argument.
\end{remark}
\begin{example}
Here we give a simple example of a WCM multifunction which is not cyclic monotone.
Given a compact set $A$, the constant map $F:\R^n\rightrightarrows\R^n$ defined by 
$F(x)\equiv A,\ x\in\R^n$, is WCM. Indeed, for a given CM 
sequence $\{(x_i,v_i)\}_{i=0}^{m-1}$ 
and for given $x_m\in\mathbb{R}^n$ 
one can choose $v_{m}=v_{m-1}\in A$ so as to fulfill \eqref{wcm2}.
If $A$ is not a singleton, then $F$ is not cyclic monotone since
cyclic monotone (and all monotone) maps are a.e. single-valued, as we have mentioned above.
\end{example}

\section{The Main Result}

We impose the following assumptions in order to prove the existence of solutions: 
\vskip 0.5em 
\textbf{A1.} $F: \mathbb{R}^n\rightrightarrows \mathbb{R}^n$ has
compact, nonempty values 
and is upper semicontinuous.

\vskip 0.5em 

\textbf{A2.} The map $F$ is weakly cyclic monotone.
\begin{remark}
\label{rem_USC_cls_gph}
We recall that the map $F: \mathbb{R}^n\rightrightarrows \mathbb{R}^n$  is locally
bounded if for any $x_0\in\R^n$ there is $\varepsilon>0$ such that
$\{y\in F(x): |x-x_0|<\varepsilon\}$ is bounded. It is well-known (see e.g. \cite{Smirnov:02})
that a locally bounded map $F: \mathbb{R}^n\rightrightarrows \mathbb{R}^n$ is
upper semicontinuous if and only if it has closed graph. 
\end{remark}

The main result of this paper is:
\begin{theorem}\label{ThM}
 Under the conditions \textbf{A1, A2} 
there exists $T>0$ such that 
 the differential inclusion (\ref{1}) has a 
solution on $[0,T]$. 
\end{theorem}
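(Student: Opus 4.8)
The plan is to manufacture from the weakly cyclic monotone map $F$ a \emph{cyclic monotone subinclusion} $G$, with $G(x)\subseteq F(x)$, nonempty compact values and closed graph, and then to invoke the existence theorem of \cite{BreCelCol:89} for upper semicontinuous cyclic monotone right-hand sides. Since every solution of $\dot x\in G(x(t))$ is automatically a solution of \eqref{1}, this yields the claim. The subinclusion will be $G=\partial f\cap F$, where $f$ is Rockafellar's convex potential attached to the fixed base point $(x_0,v_0)$. To guarantee finiteness I would first localize: as $F$ is USC with compact values it is locally bounded, so there are $r,M>0$ with $\|F(x)\|\le M$ on the closed ball $\bar B=\bar B(x_0,r)$, and for $t\le r/M$ any solution with $|\dot x|\le M$ stays in $\bar B$.

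Concretely I would set, for $y\in\R^n$,
\[
f(y)=\sup\Big\{\,\skal{y-x_k}{v_k}+\sum_{i=1}^{k}\skal{x_i-x_{i-1}}{v_{i-1}}\,\Big\}=:\sup_S\Phi(y;S),
\]
the supremum being over all CM sequences $S=\{(x_i,v_i)\}_{i=0}^{k}\subset\Graph(F)$ that start at $(x_0,v_0)$ and whose points $x_i$ all lie in $\bar B$. As a supremum of affine functions, $f$ is convex and lower semicontinuous. Applying \eqref{mc} with $m=k$ to each competing $S$ gives $\Phi(x_0;S)\le 0$, so with the trivial sequence one gets $f(x_0)=0$; since $\Phi(y;S)=\Phi(x_0;S)+\skal{y-x_0}{v_k}\le M|y-x_0|$ (using $|v_k|\le M$), the function $f$ is finite on all of $\R^n$. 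Taking the one-term sequence shows $f(y)\ge\skal{y-x_0}{v_0}=f(x_0)+\skal{v_0}{y-x_0}$, i.e. $v_0\in\partial f(x_0)\cap F(x_0)$.

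The heart of the matter is the nonemptiness of $G(x)=\partial f(x)\cap F(x)$ for $x\in\bar B$, and this is exactly where the WCM hypothesis enters. Given such an $x$, take a maximizing sequence of admissible CM sequences $S^{(n)}$ with $\Phi(x;S^{(n)})\to f(x)$. By Definition \ref{Def_WECM} each $S^{(n)}$ can be continued by the point $x$ to a CM sequence $S'^{(n)}$ ending in some $(x,v_n)$, $v_n\in F(x)$, and a direct computation shows this continuation leaves the value unchanged, $\Phi(x;S'^{(n)})=\Phi(x;S^{(n)})$, while $\Phi(y;S'^{(n)})=\skal{y-x}{v_n}+\Phi(x;S'^{(n)})$ for every $y$. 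Hence $v_n$ is an $\varepsilon_n$-subgradient of $f$ at $x$ with $\varepsilon_n=f(x)-\Phi(x;S^{(n)})\to 0$. Compactness of $F(x)$ gives a subsequence $v_n\to v\in F(x)$, and passing to the limit in the subgradient inequality yields $v\in\partial f(x)$, so $v\in G(x)$.

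Finally I would record that $G$ is cyclic monotone and upper semicontinuous. Since $G(x)\subseteq\partial f(x)$ and subdifferentials of convex functions satisfy \eqref{mc}, $G$ is cyclic monotone; its graph is closed, being the intersection of the closed graph of $F$ (Remark \ref{rem_USC_cls_gph}) with the closed graph of $\partial f$, and its values are compact (closed subsets of $F(x)$) and, by the previous paragraph, nonempty on $\bar B$. Applying the theorem of \cite{BreCelCol:89} to $G$ then produces a solution on some $[0,T]$, which by $G\subseteq F$ solves \eqref{1}. I expect the nonemptiness step to be the main obstacle: one must verify that the WCM continuation of near-optimal cyclic monotone sequences, together with compactness of $F(x)$, genuinely delivers an element of $F(x)$ lying in $\partial f(x)$, and one must handle the localization carefully so that $f$ stays finite and the solution does not leave $\bar B$ before time $T$.
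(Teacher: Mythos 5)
Your proposal is correct and follows essentially the same route as the paper: it builds the Rockafellar-type potential $g(x)=\sup_S g(S,x)$ over cyclic monotone sequences anchored at $(x_0,v_0)$, uses the WCM continuation of a maximizing sequence together with compactness of $F(x)$ to produce an element of $F(x)\cap\partial g(x)$, and then applies the theorem of \cite{BreCelCol:89} to this cyclic monotone, upper semicontinuous subinclusion. The only (immaterial) differences are that you define $G=\partial g\cap F$ directly rather than via the inequality $\skal{x-x_0}{v}\ge g(x)$, and you add a localization to a ball to ensure finiteness of the potential, which the paper obtains as a byproduct of the same compactness argument.
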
 
 To proof the theorem, we use the following result proved in \cite{BreCelCol:89}.
\begin{theorem}
\label{th_BCC}
 If $F: \mathbb{R}^n\rightrightarrows \mathbb{R}^n$ has non-empty compact images
and is upper semicontinuous and cyclic monotone, then there exists $T>0$ such that 
 the differential inclusion (\ref{1}) has a 
solution on $[0,T]$. 
\end{theorem}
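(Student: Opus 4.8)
\emph{Proof proposal.} The plan is to realize $F$ as a sub-selection of a subdifferential, run the Euler scheme, and then use the resulting convex potential as an energy functional that upgrades the usual \emph{weak} convergence of velocities to \emph{strong} convergence, thereby removing the convexification that an Arzel\`a--Ascoli/Mazur argument alone would leave. First I would invoke Rockafellar's theorem on cyclically monotone operators \cite{Rock:ConvAnal}: since $F$ is cyclic monotone, there is a proper lower semicontinuous convex function $V\colon\R^n\to(-\infty,+\infty]$ with $F(x)\subseteq\partial V(x)$ for every $x$, and I would use that each $\partial V(x)$ is closed and \emph{convex}. Because $F$ is USC with compact values it is locally bounded (Remark~\ref{rem_USC_cls_gph}), so I would fix $r>0$ and $M$ with $\|F(x)\|\le M$ on the closed ball $\bar B(x_0,r)$, set $T=r/M$, and observe that this ball lies in the interior of $\operatorname{dom}V$, where $V$ is locally Lipschitz.

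Next I would construct Euler polygonal approximants $x_n$ of mesh tending to $0$, with node velocities $v_i\in F(x_i)$ chosen arbitrarily. They are equi-Lipschitz with constant $M$ and remain in $\bar B(x_0,r)$, so Arzel\`a--Ascoli yields a subsequence converging uniformly to a Lipschitz limit $x$; passing to a further subsequence, $\dot x_n\rightharpoonup w$ weakly in $L^2(0,T;\R^n)$ with $x(t)=x_0+\int_0^t w$. The standard convergence theorem for USC inclusions via Mazur \cite{AubCel:84} then gives $w(t)\in\overline{\operatorname{co}}\,F(x(t))$ for a.e.\ $t$, and since $F(x(t))\subseteq\partial V(x(t))$ with the latter set convex, this already forces the crucial inclusion $w(t)\in\partial V(x(t))$ for a.e.\ $t$.

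The heart of the argument is a matching energy identity. Along the limit, $t\mapsto V(x(t))$ is absolutely continuous, and at a.e.\ $t$ the chain rule for convex functions shows that $\skal{p}{\dot x(t)}$ is independent of $p\in\partial V(x(t))$ and equals $\frac{d}{dt}V(x(t))$; choosing $p=w(t)=\dot x(t)$, which is legitimate by the previous step, I obtain $\frac{d}{dt}V(x(t))=|w(t)|^2$ and hence $V(x(T))-V(x_0)=\int_0^T|w|^2\,dt$. On the discrete side, the subgradient inequality $V(x_{i+1})\ge V(x_i)+\skal{v_i}{x_{i+1}-x_i}$ summed over the Euler steps gives $\int_0^T|\dot x_n|^2\,dt\le V(x_n(T))-V(x_0)$, whose right-hand side converges to $V(x(T))-V(x_0)=\int_0^T|w|^2$ by continuity of $V$ on the interior of its domain. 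Combined with the weak lower semicontinuity $\int_0^T|w|^2\le\liminf_n\int_0^T|\dot x_n|^2$, this sandwich forces $\int_0^T|\dot x_n|^2\to\int_0^T|w|^2$.

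Finally, convergence of the $L^2$-norms together with weak convergence yields strong convergence $\dot x_n\to w$ in $L^2$; along a further subsequence $\dot x_n(t)\to w(t)$ for a.e.\ $t$. Since $\dot x_n(t)\in F\bigl(x_n(\sigma_n(t))\bigr)$ for the left node $\sigma_n(t)\to t$ with $x_n(\sigma_n(t))\to x(t)$, the closed graph of $F$ (equivalently, its upper semicontinuity) gives $w(t)\in F(x(t))$ for a.e.\ $t$, so $x$ solves \eqref{1} on $[0,T]$. The main obstacle I anticipate is exactly this passage from weak to strong convergence of the velocities: the delicate point is justifying that the convexified limit velocity still lies in $\partial V$ and then extracting the matching energy identity from the convex chain rule, since this is precisely what prevents the limit from merely solving the convexified inclusion $\dot x\in\overline{\operatorname{co}}\,F(x)$.
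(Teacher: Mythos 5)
The paper itself offers no proof of this theorem---it is imported as a black box from \cite{BreCelCol:89}---so the comparison is with the cited source, and your reconstruction is correct and essentially the Bressan--Cellina--Colombo argument: embed $F(x)\subseteq\partial V(x)$ by Rockafellar's theorem, note $V$ is finite (hence locally Lipschitz) on all of $\R^n$ because $\operatorname{dom}\partial V\supseteq\operatorname{dom}F=\R^n$, and use $V$ as an energy whose discrete subgradient estimate $\int_0^T|\dot x_n|^2\,dt\le V(x_n(T))-V(x_0)$, matched against the a.e.\ chain-rule identity $V(x(T))-V(x_0)=\int_0^T|w|^2\,dt$, upgrades the weak $L^2$ convergence of the Euler velocities to strong (Radon--Riesz) convergence, after which the closed graph of $F$ yields $w(t)\in F(x(t))$ a.e. The steps you flag as delicate are soundly justified---in particular $w(t)\in\overline{\mathrm{co}}\,F(x(t))\subseteq\partial V(x(t))$ holds because $\partial V(x(t))$ is closed and convex, and the independence of $\skal{p}{\dot x(t)}$ from $p\in\partial V(x(t))$ at a.e.\ $t$ follows from the two one-sided difference quotients in the subgradient inequality---so no genuine gap remains.
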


We reduce our considerations to the last theorem by the following claim.
\begin{theorem}\label{l1}
 If $F: \mathbb{R}^n\rightrightarrows \mathbb{R}^n$, 
satisfies {\bf A1,A2},  then for every fixed $x_0\in\mathbb{R}^n,v_0\in F(x_0)$ 
the following map is upper semicontinuous and cyclic monotone, with nonempty
compact images:
\begin{equation}
\label{def_G}
G(x)=\left\{ v\in F(x) : \skal{x-x_0}{v} \ge \sup\Bigl\{ \skal{x-x_m}{v_m}+ \sum_{i=1}^m \skal{x_i-x_{i-1}}{v_{i-1}} 
																													\Bigr\} \right\},
\end{equation}
where the supremum is taken on all natural $m$ and all CM 
sequences $\{(x_i,v_i)\}_{i=0}^m$. 

\end{theorem}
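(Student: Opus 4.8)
The plan is to recognize the right-hand side of the inequality in \eqref{def_G} as a Rockafellar-type convex potential and to exhibit $G$ as a selection of its subdifferential. Concretely, I would set
\[
f(x)=\sup\Bigl\{\skal{x-x_m}{v_m}+\sum_{i=1}^m\skal{x_i-x_{i-1}}{v_{i-1}}\Bigr\},
\]
the supremum being over all natural $m$ and all CM sequences $\{(x_i,v_i)\}_{i=0}^m$, so that $G(x)=\{v\in F(x):\skal{x-x_0}{v}\ge f(x)\}$. Each term inside the supremum is an affine function of $x$, so $f$ is convex and lower semicontinuous. The $m=0$ term gives $f(x)\ge\skal{x-x_0}{v_0}>-\infty$. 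For an upper bound I would invoke \textbf{A2}: given any CM sequence ending at index $m$, weak cyclic monotonicity (Definition \ref{Def_WECM}) lets me append the point $x$, producing $v\in F(x)$ with $\skal{x-x_0}{v}\ge\skal{x-x_m}{v_m}+\sum_{i=1}^m\skal{x_i-x_{i-1}}{v_{i-1}}$; since $\skal{x-x_0}{v}\le\delta^*(x-x_0,F(x))<\infty$, taking the supremum yields $f(x)\le\delta^*(x-x_0,F(x))$. Hence $f$ is finite everywhere, and therefore continuous.

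Nonemptiness and compactness of $G(x)$ then follow quickly. Taking $v^{*}\in F(x)$ that maximizes $\skal{x-x_0}{\cdot}$ over the compact set $F(x)$, the bound above gives $\skal{x-x_0}{v^{*}}=\delta^*(x-x_0,F(x))\ge f(x)$, so $v^{*}\in G(x)$. Since $G(x)$ is the intersection of the compact set $F(x)$ with the closed half-space $\{v:\skal{x-x_0}{v}\ge f(x)\}$, it is compact.

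The heart of the argument is to prove $G(x)\subseteq\partial f(x)$. Fix $v\in G(x)$. The key observation is that appending $(x,v)$ to \emph{any} CM sequence $\{(x_i,v_i)\}_{i=0}^p$ again yields a CM sequence: the instances of \eqref{mc} for indices $m\le p$ are inherited (they involve only the first $p+1$ terms), while the only new instance, at index $p+1$, reads $\skal{x-x_0}{v}\ge\skal{x-x_p}{v_p}+\sum_{i=1}^p\skal{x_i-x_{i-1}}{v_{i-1}}$, which holds because its right-hand side is one of the terms defining $f(x)$ and $\skal{x-x_0}{v}\ge f(x)$. Now, for fixed $y\in\R^n$, I would choose a CM sequence whose term approximates $f(x)$ within $\varepsilon$; evaluating $f(y)$ along the sequence obtained by appending $(x,v)$ gives $f(y)\ge\skal{y-x}{v}+f(x)-\varepsilon$, and letting $\varepsilon\to0$ shows $f(y)\ge f(x)+\skal{v}{y-x}$, i.e. $v\in\partial f(x)$. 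Since the subdifferential of a proper convex function is cyclic monotone \cite{Rock:ConvAnal} and $G$ is a selection of $\partial f$, the map $G$ is cyclic monotone.

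Finally, upper semicontinuity would follow from Remark \ref{rem_USC_cls_gph}: $G$ is locally bounded because $G(x)\subseteq F(x)$ and $F$ is locally bounded, and $G$ has closed graph because if $(x_k,v_k)\to(x,v)$ with $v_k\in G(x_k)$, then $v\in F(x)$ by the closed graph of $F$, while continuity of $f$ passes $\skal{x_k-x_0}{v_k}\ge f(x_k)$ to the limit $\skal{x-x_0}{v}\ge f(x)$. I expect the main obstacle to be the CM-extension step: verifying cleanly that appending $(x,v)$ requires checking only the top-level instance of \eqref{mc}, and that weak cyclic monotonicity (rather than full cyclic monotonicity of $F$) is exactly what is needed both to bound $f$ from above and to make this construction go through.
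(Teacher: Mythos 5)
Your proposal is correct and follows essentially the same route as the paper: the same convex potential $\sup_S\bigl\{\skal{x-x_m}{v_m}+\sum_{i=1}^m\skal{x_i-x_{i-1}}{v_{i-1}}\bigr\}$, the same appending argument showing $G(x)\subset\partial f(x)$ and hence cyclic monotonicity via Rockafellar's criterion, and the same closed-graph plus local-boundedness argument for upper semicontinuity. The only (minor, and arguably cleaner) difference is in proving $G(x)\neq\emptyset$: you bound $f(x)$ by $\delta^*(x-x_0,F(x))$ and take the support-function maximizer, whereas the paper extracts a convergent subsequence of the velocities $v(S^k,x)$ supplied by \textbf{A2} along a maximizing sequence of CM sequences.
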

\begin{proof} 
Let $x_0,v_0\in F(x_0)$ be fixed as above. 
 For a given finite CM sequence $S=\{(x_i,v_i)\}_{i=0}^m$
and for $x\in\R^n$ we denote 
\[
g(S,x)=\skal{x-x_m}{v_m}+\sum_{i=1}^m \skal{x_i-x_{i-1}}{v_{i-1}}.
\]
Define the function
\begin{equation}
\label{eq_def_g}
g(x)=\sup_{S}g(S,x),
\end{equation}
where the supremum is taken on all $m\in \mathbb{N}$ and all finite CM 
sequences $S=\{(x_i, v_i)\}_{i=0}^m \subset \Graph(G)$.
Clearly, $g$ is convex, as a supremum of affine functions. Further, 
\[
g(x_0)=\sup_{S}
       \Bigl\{ \skal{x_0-x_m}{v_m}+\sum_{i=1}^m \skal{x_i-x_{i-1}}{v_{i-1}} \Bigr\},
\]
and 
using \eqref{mc} we get that $g(x_0)\le 0$. Taking $m=0$, we easily see
that $g(x_0)=0$, which implies that $g$ is proper.

Next, we prove that $G(x)$ is non-empty for every $x\in\mathbb{R}^n$.
Let us note that in the above notation
\[
G(x)=\left\{ v\in F(x) : \skal{x-x_0}{v} \ge  g(x)  \right\}.
\]
Given a finite CM sequence $S=\{(x_i,v_i)\}_{i=0}^m$ and 
setting $x_{m+1}=x$, by \textbf{A2} there is $v_{m+1}\in F(x)$, which
we denote by $v(S,x)$, such that
\[
\skal{x-x_0}{v(S,x)}\ge g(S,x). 
\]
Since $g(x)$ is a supremum,
we can choose a sequence $\{S^k\}_{k=1}^\infty$ such that 
\[
g(x)=\lim_{k\to\infty}g(S^k,x)\le \liminf_{k\to\infty}\skal{x-x_0}{v(S^k,x)}.
\]
Since $F(x)$ is compact, the limit of every convergent subsequence of $\{v(S^k,x)\}_{k=1}^\infty$ is in $F(x)$.
Denoting a limit of a converging subsequence of $\{v(S^k,x)\}_{k=1}^\infty$  by $v(x)\in F(x)$,
we get that $g(x)\le \skal{x-x_0}{v(x)}$, thus $v(x)\in G(x)$.
 Thus, \  $G(x)\neq\emptyset$ for any $x\in\mathbb{R}^n$.
Clearly, $G(x)$ is compact as a closed subset of the compact $F(x)$.

Now, by \cite[Proposition 1.1.3]{AubCel:84},
every upper semicontinuous map with compact values defined on a compact domain has a compact range,
therefore $F$ is locally bounded. Hence $G$ is locally bounded as a sub-mapping of $F$ and
according to Remark \ref{rem_USC_cls_gph}, in order to prove that $G$ is upper semicontinuous, 
it is enough to show that its graph is closed.
Take the sequences $x^k\to x^\infty$ and $v^k\to v^\infty$ with $v^k\in G(x^k)$.
Passing to the limit for $k\to\infty$ in the inequalities of \eqref{def_G} and taking in account that $F$ has closed graph,
we get that $v^\infty\in G(x^\infty)$, hence $G$ is USC.

To show that $G$ is cyclic monotone, we use the criterion that a map $G$ is cyclic monotone 
if and only if $G(x)$ is a subset of the subdifferential of a proper convex function \cite{Rock:ConvAnal}.
Our argument is similar to the one in Theorem 24.8 in \cite{Rock:ConvAnal}.

We now prove that $G(x)\subset \partial g(x)$ for any point $x\in \mathbb{R}^n$.
 For this, it is sufficient to show that for all $(x,v)\in\Graph(G)$ and for any $y\in\R^n$ and $\alpha<g(x)$,
\[
g(y)-\alpha >\skal{y-x}{v}.
\]
Indeed, since $g(x)$ is a supremum and $\alpha<g(x)$, there is
a finite CM sequence $S=\{(x_i,v_i)\}_{i=0}^k$,  
such that
\begin{equation}
\label{ineq_alpha}
\alpha < g(S,x)=\skal{x-x_k}{v_k}+\sum_{i=1}^k \skal{x_i-x_{i-1}}{v_{i-1}}.
\end{equation}
Now, adding the point $x_{k+1}=x,v_{k+1}=v\in G(x)$ to the sequence $S$, we obtain 
the CM sequence $\hat{S}=\{(x_i,v_i)\}_{i=0}^{k+1}$, and by the definition of $g$,
\[
g(y) \ge g(\hat{S},y)= \skal{y-x}{v}+\skal{x-x_k}{v_k}+\sum_{i=1}^k \skal{x_i-x_{i-1}}{v_{i-1}}.
\]
By \eqref{ineq_alpha}, 
\[
g(y) > \skal{y-x}{v}+ \alpha,
\]
which completes the proof.
\end{proof}
We obtain Theorem \ref{ThM} applying  Theorem \ref{th_BCC} for the inclusion
\[
\dot x(t) \in G(x(t)), \qquad x(0)=x_0, ,\quad t\in I=[0,T],
\]
with $G(x)$ defined in Theorem \ref{l1}.

\vspace{0.5em}
\begin{corollary}
Theorem \ref{ThM} holds if one replaces \textbf{A2} by \eqref{WMC2} 
for every finite sequence $\{x_i\}_{i=0}^m$.
\end{corollary}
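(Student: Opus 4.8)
The plan is to recognize that this Corollary follows immediately from Theorem \ref{ThM}, once we invoke the implication already recorded in Remark \ref{cond_WCM2}. There, it is shown that condition \eqref{WMC2}, required for every finite sequence $\{x_i\}_{i=0}^m$, implies the weak cyclic monotonicity property \textbf{A2}. The mechanism of that argument is to extend any CM sequence $\{(x_i,v_i)\}_{i=0}^{m-1}$ by choosing $v_m\in F(x_m)$ attaining the support function, $\skal{x_m-x_0}{v_m}=\delta^*(x_m-x_0,F(x_m))$; then \eqref{WMC2} gives $\skal{x_m-x_0}{v_m}\ge \sum_{i=1}^m \delta^*(x_i-x_{i-1},F(x_{i-1}))\ge \sum_{i=1}^m \skal{x_i-x_{i-1}}{v_{i-1}}$, which is precisely \eqref{mc} and certifies that the extended sequence is CM.

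Accordingly, the proof is a single deduction. Assuming \textbf{A1} together with \eqref{WMC2} for every finite sequence, Remark \ref{cond_WCM2} guarantees that $F$ is weakly cyclic monotone, so \textbf{A2} holds as well. Hence $F$ satisfies both \textbf{A1} and \textbf{A2}, and applying Theorem \ref{ThM} directly yields a $T>0$ and a solution of \eqref{1} on $[0,T]$.

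No genuine obstacle arises: the only nontrivial content, namely that the support-function inequality \eqref{WMC2} always permits a cyclic-monotone continuation, has already been dispatched in Remark \ref{cond_WCM2}. The Corollary simply chains that implication to the main existence theorem, so the task reduces to citing these two results in order.
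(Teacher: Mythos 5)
Your proposal is correct and matches the paper's intended argument: the corollary is stated without proof precisely because Remark \ref{cond_WCM2} already establishes that \eqref{WMC2} implies \textbf{A2}, so Theorem \ref{ThM} applies directly. Nothing further is needed.
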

\begin{remark}
As we mentioned above, in one dimension the weak monotonicity
coincides with the weak componentwise monotonicity for which the existence
of solutions is proved in \cite{FarDonBai:14}. We conjecture that there exist
solutions for weakly monotone right-hand sides also in higher dimensions.
\end{remark}

\vspace{0.5em}

\textbf{Acknowledgements.} 
The research  is 
partly supported by the Austrian Science Foundation (FWF) under grant
P 26640-N25 and by Minkowski Minerva 
Center for Geometry at Tel-Aviv University. 
The author is grateful to 
R. Baier, T. Donchev and especially to an anonymous referee for their valuable 
remarks.

\end{document}